\theoremstyle{plain}
\newtheorem{thm}{Theorem}
\newtheorem{prop}{Proposition}[section]
\newtheorem{lem}[prop]{Lemma}
\newtheorem{thr}[prop]{Theorem}
\newtheorem{cor}[thm]{Corollary}
\newtheorem*{con}{Newman's Conjecture}
\theoremstyle{definition}
\newtheorem{rem}[prop]{Remark}
\newtheorem{dfn}[prop]{Definition}
\newtheorem*{re*}{Remark}
\newtheorem*{ex*}{Example}
\newtheorem*{pro*}{Property A}
\numberwithin{equation}{section}
\newcommand{\sm}{\left(\begin{smallmatrix}}
\newcommand{\esm}{\end{smallmatrix}\right)}
\newfont{\FieldFont}{msbm10 scaled\magstep1}
\begin{document}

\title[Distribution of integral Fourier Coefficients
Modulo Primes] {Distribution of integral Fourier Coefficients of a
Modular Form of Half Integral Weight Modulo Primes}

\author{D. choi }
\address{School of Mathematics, KIAS, 207-43 Cheongnyangni 2-dong 130-722, Korea}
\email{choija@postech.ac.kr}

\subjclass[2000]{11F11,11F33} \keywords{Modular forms,
Congruences}

\begin{abstract}
Recently, Bruinier and Ono classified cusp forms
$f(z):=\sum_{n=0}^{\infty}a_f(n)q^n\in
S_{\lambda+\frac{1}{2}}(\Gamma_0(N),\chi)\cap \mathbb{Z}[[q]]$
that does not satisfy a certain distribution property for modulo
odd primes $p$. In this paper, using Rankin-Cohen Bracket, we
extend this result to modular forms of half integral weight for
primes $p \geq 5$. As applications of our main theorem we derive
distribution properties, for  modulo primes $p\geq5$, of traces of
singular moduli and Hurwitz class number. We also study an
analogue of Newman's conjecture for overpartitions.
\end{abstract}

 \maketitle

%%%%%%%%%%%%%%%%%%%%%%%%%%%%%%%%%%%%%%%%%%%%%%%%%%%%%%%%%%%%%%%%%%%%%%%%%%%%%%%%%%%%%%%%%%%%%%%%%%%%%%%
%%%%%%%%%%%%%%%%%%%%%%%%%%%%%%%%%%%%%% Section 1 %%%%%%%%%%%%%%%%%%%%%%%%%%%%%%%%%%%%%%%%%%%%%%%%%%%%%%
%%%%%%%%%%%%%%%%%%%%%%%%%%%%%%%%%%%%%%%%%%%%%%%%%%%%%%%%%%%%%%%%%%%%%%%%%%%%%%%%%%%%%%%%%%%%%%%%%%%%%%%

\section{Introduction and Results}
Let $M_{\lambda+\frac{1}{2}}(\Gamma_0(N),\chi)$ and
$S_{\lambda+\frac{1}{2}}(\Gamma_0(N),\chi)$ be the spaces,
respectively, of modular forms and cusp forms of weight
$\lambda+\frac{1}{2}$ on $\Gamma_0(N)$ with a Dirichlet character
$\chi$ whose conductor divides $N$. If $f(z)\in
M_{\lambda+\frac{1}{2}}(\Gamma_0(N),\chi)$, then $f(z)$ has the
form
$$f(z)=\sum_{n=0}^{\infty}a(n)q^n,$$ where $q:=e^{2 \pi i z}$.
It is well-known that the coefficients of $f$ are related to
interesting objects in number theory such as the special values of
$L$-function, class number, traces of singular moduli and so on.
In this paper, we study congruence properties of the Fourier
coefficients of $f(z)\in
M_{\lambda+\frac{1}{2}}(\Gamma_0(N),\chi)\cap \mathbb{Z}[[q]]$ and
their applications.

Recently, Bruinier and Ono proved in \cite{B-O} that $g(z)\in
S_{\lambda+\frac{1}{2}}(\Gamma_0(N),\chi)\cap \mathbb{Z}[[q]]$ has
a special form (see (\ref{2.1})) by modulo $p$ when $p$ is an odd
prime and the coefficients of $f(z)$ do not satisfy the following
property for $p$:
\begin{pro*}
If $M$ is a positive integer, we say that a sequence $\alpha(n)
\in \mathbb{Z}$ satisfies Property A for $M$ if for every integer
$r$
$$\begin{array}{c}
  \sharp\{\; 1 \leq n \leq X \; | \; \alpha(n) \equiv r \pmod{M} \text{
and } \gcd(M,n)=1\}\\
  \gg_{r,M}\left\{\begin{array}{cl}
  \frac{\sqrt{X}}{logX} \;\;\;& \text{ if } r \not \equiv 0 \pmod{M}, \\
  X \quad \;\;&\text{ if } r \equiv 0 \pmod{M}.
\end{array}\right.
\end{array}
$$
\end{pro*}
Let
$$\theta(f(z)):=\frac{1}{2 \pi i}\cdot\frac{d}{dz}f(z)=\sum_{n=1}^{\infty}n\cdot a(n)q^n.$$
Using Rankin-Cohen Bracket (see (\ref{cohen})), we prove that
there exists $$\widetilde{f}(z) \in
S_{\lambda+p+1+\frac{1}{2}}(\Gamma_0(4N),\chi)\cap
\mathbb{Z}[[q]]$$ such that $\theta(f(z))\equiv \widetilde{f}(z)
\pmod{p}$. We extend the results in \cite{B-O} to modular forms of
half integral weight.
\begin{thm}\label{main1}
Let $\lambda$ be a non-negative integer. We assume that
$f(z)=\sum_{n=0}^{\infty}a(n)q^n\in
M_{\lambda+\frac{1}{2}}(\Gamma_0(4N),\chi)\cap \mathbb{Z}[[q]],$
where $\chi$ is a real Dirichlet character. If $p\geq 5$ is a
prime and there exists a positive integer $n$ for which
$\gcd(a(n),p)=1$ and $\gcd(n,p)=1$, then at least one of the
following is true:
\begin{enumerate}
\item The coefficients of $\theta^{p-1}(f(z))$ satisfies Property
A for $p$. \item There are finitely many square-free integers
$n_1$, $n_2, \cdots,n_t$ for which
\begin{equation}\label{form}
\theta^{p-1}(f(z))\equiv \sum_{i=1}^t\sum_{m=0}^{\infty}a(n_i
m^2)q^{n_i m^2} \pmod{p}.\end{equation} Moreover, if
$\gcd(4N,p)=1$ and an odd prime $\ell$ divides some $n_i$, then
$$p|(\ell-1)\ell(\ell+1)N \text{ or } \ell\mid N.$$
\end{enumerate}
\end{thm}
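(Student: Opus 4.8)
The strategy is to reduce Theorem~\ref{main1} to the Bruinier--Ono classification in \cite{B-O} by passing from the half-integral weight form $f$ to the form $\theta^{p-1}(f(z))$, which by construction (iterating the $\theta$-operator and replacing each step by a genuine cusp form via the Rankin--Cohen bracket, as stated in the excerpt) lies in $S_{\lambda'+\frac12}(\Gamma_0(4N),\chi)\cap\mathbb{Z}[[q]]$ for a suitable $\lambda'$, and moreover is annihilated by $\theta$ modulo $p$ in the sense that $\theta^{p}(f)\equiv\theta(f)\pmod p$. The first step is to record that $\theta^{p-1}(f)\not\equiv 0\pmod p$: the hypothesis gives $n$ with $p\nmid a(n)$ and $p\nmid n$, so $p\nmid n^{p-1}a(n)$, which is the $n$-th coefficient of $\theta^{p-1}(f)$. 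Thus $g:=\theta^{p-1}(f)$ is a nonzero half-integral weight cusp form mod $p$ whose support is concentrated on indices prime to $p$ (since $\theta^{p-1}$ kills the coefficients at multiples of $p$ and fixes the others mod $p$). Now apply the Bruinier--Ono dichotomy (their Theorem, referenced via (\ref{2.1})) to $g$: either the coefficients of $g$ satisfy Property~A for $p$ — this is alternative (1) — or $g$ is, modulo $p$, a linear combination of ``square-class'' theta-like series, i.e. of the shape $\sum_i\sum_m a(n_i m^2)q^{n_i m^2}$ with the $n_i$ square-free — this is the first assertion of alternative~(2).

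**The ramification statement.** It remains to prove the last sentence of (2): assuming $\gcd(4N,p)=1$, if an odd prime $\ell\mid n_i$ for some $i$, then $p\mid(\ell-1)\ell(\ell+1)N$ or $\ell\mid N$. The idea is to play the Hecke operator $T_{\ell^2}$ (acting on half-integral weight forms of level $4N$) against the special shape in (\ref{form}). On one hand, the space spanned mod $p$ by forms of the type $\sum a(n_i m^2) q^{n_i m^2}$ is visibly stable under $T_{\ell^2}$ when $\ell\nmid n_i$, but the presence of a prime $\ell\mid n_i$ forces, via the explicit formula for the action of $T_{\ell^2}$ on the coefficient at $n_i$ (which involves $a(n_i/\ell^2)$, a value $\chi(\ell)\ell^{\lambda'-1}$ times $a(n_i)$, and a term with the quadratic character $\left(\frac{(-1)^{\lambda'}n_i}{\ell}\right)$), a congruence constraint. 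Concretely, one shows that if $\ell\nmid N$ and $p\nmid(\ell^2-1)\ell$, then $\ell$ cannot divide any $n_i$, because otherwise comparing the $n_i$-th and $(n_i/\ell)$-th (or $n_i\ell$-th) Fourier coefficients on both sides of the $T_{\ell^2}$-eigenform decomposition mod $p$ — combined with the fact that $g$ lies in a space where, mod $p$, one may diagonalize the (commuting, and mod $p$ semisimple away from the excluded primes) Hecke algebra — yields $0\equiv(\text{unit})\cdot a(n_i)\pmod p$, contradicting square-freeness-driven nonvanishing. This is exactly the half-integral-weight analogue of the argument Bruinier--Ono use in the integral weight / Serre setting.

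**Where the work concentrates.** The genuinely delicate point is this last Hecke-theoretic step: one must set up the mod $p$ Hecke theory for $S_{\lambda'+\frac12}(\Gamma_0(4N),\chi)$ carefully enough that "the Hecke operators $T_{\ell^2}$ act semisimply modulo $p$ for $\ell\nmid pN$" is usable, and one must track the precise coefficient formula for $T_{\ell^2}$ to see where each of the factors $\ell-1$, $\ell$, $\ell+1$, and $N$ enters (the $\ell\pm1$ coming from $\ell^{2\lambda'-2}-1$-type expressions and the value of the quadratic Gauss-sum-like character, the $\ell$ from the ``middle'' term, and $N$ from ramification / the conductor of $\chi$). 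I expect the reduction of (1)--(2) to \cite{B-O} to be essentially formal once $g=\theta^{p-1}(f)$ is identified as an honest cusp form mod $p$; the main obstacle — and the place where the hypotheses $p\ge5$ and $\gcd(4N,p)=1$ are actually consumed — is making the eigenform-comparison argument rigorous, in particular ensuring that the relevant piece of the mod $p$ Galois/Hecke module is large enough that nonvanishing of $a(n_i)$ is not an artifact, and handling the prime $\ell=p$ and small primes separately (which is why $p\ge 5$ is imposed).
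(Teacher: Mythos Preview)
Your reduction to Bruinier--Ono is correct and matches the paper exactly: iterate the Rankin--Cohen bracket $[E_{p-1},\,\cdot\,]_1$ to produce a genuine cusp form $\widetilde f\in S_{\lambda+(p^2-1)+\frac12}(\Gamma_0(4N),\chi)\cap\mathbb Z[[q]]$ with $\widetilde f\equiv\theta^{p-1}(f)\pmod p$, check $\widetilde f\not\equiv 0\pmod p$ from the hypothesis on $a(n)$, and feed $\widetilde f$ into Theorem~\ref{B-O}. This yields alternative~(1) and the first clause of alternative~(2) essentially for free.

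The gap is in your treatment of the ramification clause. You invoke ``mod $p$ semisimplicity of the Hecke operators $T(\ell^2)$'' together with a coefficient comparison at $n_i$, but neither ingredient is what the paper uses, and neither is readily available. Mod-$p$ semisimplicity of half-integral Hecke operators is not a standard input here, and at the index $n_i$ with $\ell\mid n_i$ square-free the explicit Hecke formula degenerates (both $\bigl(\tfrac{n_i}{\ell}\bigr)$ and $a(n_i/\ell^2)$ vanish), so no contradiction emerges from that coefficient alone. The paper's mechanism is different and more concrete: it \emph{twists} $\widetilde f$ by a suitable product of quadratic characters so as to kill every square class except the one through $n_1$, obtaining a form $h$ (on a larger level still prime to $p$ and to $\ell_1$) which is again of the shape~(\ref{2.1}) but now with a \emph{single} square-free support $n_1$. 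Since $\ell_1\mid n_1$ gives $\bigl(\tfrac{n_1}{\ell_1}\bigr)=0\in\{0,\epsilon\}$ for \emph{both} $\epsilon=+1$ and $\epsilon=-1$, the Bruinier--Ono eigenvalue identity~(\ref{hecke}) holds with both signs simultaneously; subtracting the two forces
\[
2\,\chi(\ell_1)\left(\frac{(-1)^{\lambda'}}{\ell_1}\right)\ell_1^{\,\lambda'-1}(\ell_1+1)(\ell_1-1)\,h\equiv 0\pmod p,
\]
and since $h\not\equiv 0$ and $p\ge 5$ one reads off $p\mid(\ell_1-1)\ell_1(\ell_1+1)$ or $\ell_1\mid N$. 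The idea you are missing is precisely this isolation-by-twisting combined with the ``both signs of $\epsilon$'' trick; without reducing to a single square class one cannot invoke~(\ref{hecke}) for both $\epsilon$ at once, and that simultaneous identity is exactly where the factors $\ell-1$, $\ell$, $\ell+1$ appear.
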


\begin{rem}
Note that for every odd prime $p\geq5$,
$$\theta^{p-1}(f(z))\equiv \sum_{\begin{smallmatrix}
  n>0 \\
  p \nmid {n}
\end{smallmatrix}}a(n)q^n \pmod{p}. $$
\end{rem}

As an applications of Theorem \ref{main1}, we study the
distribution of traces of singular moduli modulo primes $p\geq5$.
Let $j(z)$ be the usual $j$-invariant function. We denote by $F_d$
the set of positive definite binary quadratic forms
$$F(x,y)=ax^2+bxy+cy^2=[a,b,c]$$ with discriminant $-d=b^2-4ac$.
For each $F(x,y)$, let $\alpha_F$ be the unique complex number in
the complex upper half plane, which is a root of $F(x,1)$. We
define $\omega_F \in \{1,2,3\}$ as
$$\omega_{F}:=\left\{\begin{array}{l}
  2 \;\;\text{ if } F\sim_{\Gamma}[a,0,a], \\
  3 \;\;\text{ if } F\sim_{\Gamma}[a,a,a], \\
  1 \;\;\text{ otherwise, }
\end{array}\right.
$$
where $\Gamma:=SL_2(\mathbb{Z})$. Here, $F\sim_{\Gamma}[a,b,c]$
denotes that $F(x,y)$ is equivalent to $[a,b,c]$. From these
notations, we define the Hecke trace of singular moduli.
\begin{dfn}
If $m \geq 1$, then we define the {\it mth Hecke trace of the
singular moduli of discriminant $-d$} as
$$t_m(d):=\sum_{F\in F_d/\Gamma}\frac{j_m(\alpha_F)}{\omega_{F}},$$
where $F_d/\Gamma$ denotes a set of $\Gamma-$equivalence classes
of $F_d$ and
$$j_m(z):=j(z)|T_0(m)=\sum_{
\begin{smallmatrix}
  d|m \\
  ad=m \\
\end{smallmatrix}}\sum_{b=0}^{d-1}j\left(\frac{az+b}{d}\right).$$
Here, $T_0(m)$ denotes the normalized $m$th weight zero Hecke
operator.
\end{dfn}
Note that $t_1(d)=t(d)$, where
$$t(d):=\sum_{F\in F_d/\Gamma}\frac{j(\alpha_F)-744}{\omega_{F}}$$
is the usual trace of singular moduli. Let
$$h(z):=\frac{\eta(z)^2}{\eta(2z)}\cdot
\frac{E_4(4z)}{\eta(4z)^6}$$ and $B_m(1,d)$ denote the coefficient
of $q^d$ in $h(z)|T(m^2,1,\chi_0)$, where
$$E_4(z):=1+240\sum_{n=1}^{\infty}\sum_{d|n}d^3 q^n, \;\eta(z):=q^{\frac{1}{24}} \prod_{n=1}^{\infty}\left(1-q^n\right),$$
and $\chi_0$ is a trivial character.  Here, $T(m^2,\lambda,\chi)$
denotes the $m$th Hecke operator of weight $\lambda+\frac{1}{2}$
with a Dirichlet chracter $\chi$ (see VI. $\S$3. in \cite{Ko} or
(\ref{Hecke})). Zagier proved in \cite{Z} that for all $m$ and $d$
\begin{equation}\label{Za} t_m(d)=-B_m(1,d).
\end{equation}
Using these generating functions, Ahlgren and Ono studied the
divisibility properties of traces and Hecke traces of singular
moduli in terms of the factorization of primes in imaginary
quadratic fields (see \cite{A-O}). For example, they proved that a
positive proportion of the primes $\ell$ has the property that
$t_m(\ell^3n)\equiv 0 \pmod{p^s}$ for every positive integer $n$
coprime to $\ell$ such that $p$ is inert or ramified in
$\mathbb{Q}\left(\sqrt{-n\ell}\right)$. Here, $p$ is an odd prime,
and $s$ and $m$ are integers with $p \nmid m$. In the following
theorem, we give the distribution of traces and Hecke traces of
singular moduli modulo primes $p$. \newpage
\begin{thm}\label{main2} Suppose that $p\geq5$ is a prime  such
that  $p\equiv 2 \pmod{3}$.
\begin{enumerate}
\item  Then, for every integer $r$, $p \nmid r$,
$$\sharp\{\; 1 \leq n \leq X \; | \; t_{1}(n) \equiv r
\pmod{p}\}\gg_{r,p}
    \left\{\begin{array}{cl}
  \frac{\sqrt{X}}{logX} \;\;\;& \text{ if } r \not \equiv 0 \pmod{p} \\
  X \quad \;\;&\text{ if } r \equiv 0 \pmod{p}.
\end{array}\right.$$
\item Then, a positive proportion of the primes $\ell$ has the
property that
$$\sharp\{\; 1 \leq n \leq X \; | \; t_{\ell}(n) \equiv r
\pmod{p}\}\gg_{r,p}
    \left\{\begin{array}{cl}
  \frac{\sqrt{X}}{logX} \;\;\;& \text{ if } r \not \equiv 0 \pmod{p} \\
  X \quad \;\;&\text{ if } r \equiv 0 \pmod{p}.
\end{array}\right.$$
  for every integer $r$, $p \nmid r$.
\end{enumerate}
\end{thm}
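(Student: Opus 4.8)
The plan is to deduce both parts from Theorem~\ref{main1} by exhibiting $t_1(n)$, and more generally $t_\ell(n)$, as Fourier coefficients modulo $p$ of a \emph{holomorphic} form of half-integral weight. By Zagier's identity~(\ref{Za}), $-t_1(n)=B_1(1,n)$ is the $n$-th coefficient of the weight-$\tfrac32$ weakly holomorphic form $h(z)=\frac{\eta(z)^2}{\eta(2z)}\cdot\frac{E_4(4z)}{\eta(4z)^6}$ on $\Gamma_0(4)$ (with a real, quadratic Nebentypus), and $-t_\ell(n)=B_\ell(1,n)$ is the $n$-th coefficient of $h\,|\,T(\ell^2,1,\chi_0)$. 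Since $h$ has a simple pole at the cusp $\infty$, coming from the factor $1/\eta(4z)^6$, Theorem~\ref{main1} cannot be applied to $h$ itself, and the first task is to remove this pole modulo $p$.

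Concretely, I would construct a holomorphic form $\widetilde h\in M_{\lambda+\frac12}(\Gamma_0(4N),\chi)$ with $\chi$ real, $N$ a power of $2$ (so $p\nmid 4N$ and $3\nmid N$, as $p\ge5$), and $\lambda$ odd with $\lambda\equiv1\pmod{p-1}$, such that $a_{\widetilde h}(n)\equiv B_1(1,n)=-t_1(n)\pmod p$ for all $n\ge1$ with $p\nmid n$. This is the technical heart of the argument, and it is here that the hypothesis $p\equiv2\pmod3$ enters in an essential way: $p\equiv2\pmod3$ is equivalent to $j=0$ being a supersingular invariant in characteristic $p$, hence to $E_4\equiv0\pmod p$ at that supersingular point, and this is exactly what permits the order-one pole of $h$ at $\infty$ to be absorbed into the weight modulo $p$ (multiplication by a power of $E_{p-1}(4z)\equiv1\pmod p$ then adjusts the weight and level without disturbing the coefficients modulo $p$). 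Granting this, a comparison of the weight-$(\lambda+\tfrac12)$ and weight-$\tfrac32$ Hecke formulas — using that $\lambda$ is odd, so the quadratic twists match, and that $\ell^{\lambda-1}\equiv1$, $\ell^{2\lambda-1}\equiv\ell\pmod p$ — shows that for every prime $\ell\nmid 4N$ the holomorphic form $\widetilde h\,|\,T(\ell^2,\lambda,\chi)$ has $n$-th coefficient $\equiv B_\ell(1,n)=-t_\ell(n)\pmod p$ whenever $p\nmid n$.

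For part (1) I apply Theorem~\ref{main1} to $\widetilde h$. Its hypothesis holds with $n=3$: $a_{\widetilde h}(3)\equiv-t_1(3)=248$ and $p\nmid 248=2^3\cdot 31$ (because $p\ge5$ and $p\ne31$, as $31\equiv1\pmod3$ while $p\equiv2\pmod3$), and $\gcd(3,p)=1$. So one of the two alternatives of Theorem~\ref{main1} holds. Alternative (2) is impossible: since $\gcd(4N,p)=1$, the coefficient of $q^3$ in $\theta^{p-1}(\widetilde h)$ equals $3^{p-1}a_{\widetilde h}(3)\equiv248\not\equiv0\pmod p$, forcing $3=n_i m^2$ for some $i$, whence (as $3$ is squarefree) $n_i=3$; but then the odd prime $3\mid n_i$ would have to satisfy $3\mid N$ or $p\mid(3-1)\cdot3\cdot(3+1)\cdot N=24N$, both false. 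Hence alternative (1) holds: $\theta^{p-1}(\widetilde h)$ satisfies Property~A for $p$. Since its $n$-th coefficient is $\equiv-t_1(n)\pmod p$ when $p\nmid n$ and $\equiv0$ when $p\mid n$, Property~A translates at once into Theorem~\ref{main2}(1): replace $r$ by $-r$, and note that dropping the side condition $\gcd(n,p)=1$ only enlarges the counting sets, so the lower bounds persist.

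For part (2), the form $\widetilde h\,|\,T(\ell^2,\lambda,\chi)$ is holomorphic and lies in $M_{\lambda+\frac12}(\Gamma_0(4N),\chi)$ for every prime $\ell\nmid 4N$. Because the Hecke operators act on the finite-dimensional space of mod-$p$ forms of this weight and level through a finite quotient, the Chebotarev density theorem produces a set $S$ of primes $\ell\nmid 4Np$, $\ell\ne3$, of positive density for which $T(\ell^2)$ acts invertibly on the relevant subspace; for $\ell\in S$ the hypothesis of Theorem~\ref{main1} then holds for $\widetilde h\,|\,T(\ell^2,\lambda,\chi)$ (some $t_\ell(n)\not\equiv0$ with $p\nmid n$, since $\theta^{p-1}(\widetilde h)\not\equiv0$ and $\theta^{p-1}$ commutes with $T(\ell^2)$ modulo $p$ as $\ell\ne p$), and alternative (2) is excluded as in part (1) once one checks that $\theta^{p-1}\bigl(\widetilde h\,|\,T(\ell^2,\lambda,\chi)\bigr)$ has a nonzero coefficient in a squarefree class prohibited by the constraint of Theorem~\ref{main1}(2), which can be arranged for $\ell\in S$. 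This yields Property~A for the sequence $-t_\ell(n)$, $p\nmid n$, hence Theorem~\ref{main2}(2). The step I expect to be the main obstacle is the construction of $\widetilde h$ in the second paragraph — proving that, precisely when $p\equiv2\pmod3$, the pole of $h$ at $\infty$ vanishes modulo $p$ after passing to a suitable weight and level; everything else is essentially formal once this is in hand, the only other delicate point being to make the positive-density set $S$ explicit while keeping alternative (2) excluded there.
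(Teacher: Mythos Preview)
Your overall strategy—reduce to a holomorphic half-integral weight form and invoke Theorem~\ref{main1} at $n=3$—matches the paper's, and your endgame with $t_1(3)=-248=-2^3\cdot 31$ and $31\equiv1\pmod3$ is exactly right. But the two places you flag as delicate are genuine gaps, and in both you have misdiagnosed the role of the hypothesis $p\equiv2\pmod3$.

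\textbf{Holomorphization.} You assert that $p\equiv2\pmod3$ (equivalently, $j=0$ supersingular) is what lets the pole of $h$ at $\infty$ be ``absorbed into the weight modulo~$p$''. This is not how the paper proceeds, and I do not see how your mechanism would work: multiplying by powers of $E_{p-1}$ leaves the principal part at $\infty$ untouched, and the supersingularity of $j=0$ says nothing about the $q^{-1}$ term of $h$. The paper instead kills the pole \emph{arithmetically}, with no restriction on $p\bmod3$: one forms
\[
h_{1,p}(z)=h(z)-\Bigl(\tfrac{-1}{p}\Bigr)h_{\chi_p}(z),
\]
whose $q^{-1}$ coefficient is $1-\left(\tfrac{-1}{p}\right)^2=0$, then multiplies by a large power of $F_p(z)=\eta(4z)^{p^2}/\eta(4p^2z)\equiv1\pmod p$ to kill the new poles at the cusps of level $4p^2$ introduced by twisting, and finally applies Proposition~\ref{reduction} (Ahlgren--Boylan) to descend to level~$4$. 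The resulting cusp form $f_{1,p}$ has $n$-th coefficient $\equiv -2t_1(n)\pmod p$ only for those $n$ with $\left(\tfrac{-n}{p}\right)=-1$; this is strictly less than what you asked of your $\widetilde h$, but it is enough. The hypothesis $p\equiv2\pmod3$ enters \emph{only here}: it is equivalent to $\left(\tfrac{-3}{p}\right)=-1$, so that the index $n=3$ actually survives in $f_{1,p}$ and the Theorem~\ref{main1} test can be run there.

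\textbf{Part (2).} Your Chebotarev argument for a positive-density set of $\ell$ where $T(\ell^2)$ ``acts invertibly'' and alternative~(2) can still be excluded is left as a promise. The paper avoids this entirely: by Lemma~\ref{2-3}(2), a positive proportion of primes $\ell$ satisfy $f_{1,p}\,|\,T(\ell^2,\lambda',\chi_0)\equiv 2f_{1,p}\pmod p$, and one checks (using $\lambda'\equiv1\pmod{p-1}$) that this Hecke image computes $\theta^{p-1}(h_{\ell,p})$ modulo~$p$. So for these $\ell$ the relevant form is literally $2f_{1,p}$, and Property~A carries over from part~(1) with no new exclusion argument needed.
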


As another application we study the distribution of Hurwitz class
number modulo primes $p\geq 5$. The Hurwitz class number $H(-N)$
is defined as follows: the class number of quadratic forms of the
discriminant $-N$ where each class $C$ is counted with
multiplicity $\frac{1}{Aut(C)}$. The following theorem gives the
distribution of Hurwitz class number modulo primes $p\geq 5$.
\begin{thm}\label{main3}
Suppose that $p \geq 5$ is a prime. Then, for every integer $r$
  $$
  \sharp\{\; 1 \leq n \leq X \; | \; H(n) \equiv r
\pmod{p}\}
  \gg_{r,p}\left\{\begin{array}{cl}
  \frac{\sqrt{X}}{logX} \;\;\;&\text{ if } r \not \equiv 0 \pmod{p}, \\
  X \quad \;\;&\text{ if } r \equiv 0 \pmod{p}.
\end{array}\right.
$$
\end{thm}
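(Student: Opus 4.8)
The plan is to derive Theorem~\ref{main3} from Theorem~\ref{main1} applied to a sieved ternary theta series. Let $r_3(n)$ be the number of representations of $n$ as a sum of three squares, so that $\theta(z)^3=\sum_{n\ge0}r_3(n)q^n$ lies in $M_{1+\frac12}(\Gamma_0(4),\psi_0)\cap\mathbb Z[[q]]$ for a real character $\psi_0$. By Gauss's three--squares theorem, $r_3(n)=24\,H(n)$ for every $n\equiv3\pmod 8$ (for the standard normalization; only the fact that $24$ is a unit modulo $p$ for $p\ge5$ will be used). Since $n\equiv3\pmod 8$ is an arithmetic progression of positive density, it suffices to control $r_3(n)\bmod p$ along it, and for that I would apply Theorem~\ref{main1} to
$$g(z):=\sum_{n\equiv3\,(8)}r_3(n)q^n,$$
which lies in $M_{1+\frac12}(\Gamma_0(4N'),\psi)\cap\mathbb Z[[q]]$ for a real character $\psi$ and a level $4N'$ that is a power of $2$ (the sieving operator $f\mapsto\sum_{n\equiv a\,(t)}c(n)q^n$ preserves the weight, preserves integrality of the coefficients, and multiplies the level by $t^2$). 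In particular $\gcd(4N',p)=1$ for every prime $p\ge5$.

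The case $r\equiv0\pmod p$ is immediate: $H(n)=0$ whenever $n\equiv1\pmod 4$, so $\sharp\{1\le n\le X:H(n)\equiv0\pmod p\}\ge\sharp\{1\le n\le X:n\equiv1\pmod 4\}\gg X$. Assume now $p\nmid r$. The coefficient of $q^3$ in $g$ is $r_3(3)=8$, which is prime to $p$, and $\gcd(3,p)=1$; hence $g$ satisfies the hypotheses of Theorem~\ref{main1} (with $\lambda=1$, with $4N'$ in the role of $4N$, and with $\psi$ in the role of $\chi$), so one of its two conclusions holds for $g$. The crux is to rule out conclusion~(2). If it held, then
$$\theta^{p-1}(g)\equiv\sum_{i=1}^t\sum_{m\ge0}a_g(n_im^2)q^{n_im^2}\pmod p$$
with the $n_i$ square-free, and, since $\gcd(4N',p)=1$, every odd prime $\ell\mid n_i$ would satisfy $p\mid(\ell-1)\ell(\ell+1)N'$ or $\ell\mid N'$. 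But by the remark following Theorem~\ref{main1}, the coefficient of $q^3$ in $\theta^{p-1}(g)$ equals $a_g(3)=r_3(3)=8$ (note $p\nmid3$ since $p\ge5$), which is nonzero modulo $p$; as $3$ is a square-free prime, this forces $3=n_i$ for some $i$. Taking $\ell=3$ then gives $p\mid 2\cdot3\cdot4\cdot N'$; since $N'$ is a power of $2$ and $p$ is odd, this means $p\mid3$, contradicting $p\ge5$ (and $3\nmid N'$). Hence conclusion~(1) holds for $g$.

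Therefore the coefficients of $\theta^{p-1}(g)$ satisfy Property~A for $p$; by the remark, these coefficients equal $r_3(n)$ when $n\equiv3\pmod 8$ and $p\nmid n$, and vanish otherwise. Writing $s:=24r$, which is nonzero modulo $p$, Property~A gives
$$\sharp\{\,1\le n\le X:\ n\equiv3\pmod 8,\ p\nmid n,\ r_3(n)\equiv s\pmod p\,\}\ \gg_{r,p}\ \frac{\sqrt X}{\log X},$$
and each such $n$ satisfies $H(n)\equiv24^{-1}s\equiv r\pmod p$; since all these $n$ lie in $\{1,\dots,X\}$, this yields the desired lower bound for $p\nmid r$ and completes the proof. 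The only real difficulty is the elimination of conclusion~(2), and it rests on the elementary observation that $3<5\le p$: the coefficient of $q^3$ survives $\theta^{p-1}$ as the unit $r_3(3)=8$, so the ``moreover'' clause of Theorem~\ref{main1} has no admissible square class in which to place it. (Alternatively one may apply Theorem~\ref{main1} directly to $\theta^3$---the same $q^3$ computation, now with $N=1$, gives the even cleaner obstruction $p\mid24$---and then invoke that the proof of Theorem~\ref{main1} in fact produces Property~A along any fixed arithmetic progression of modulus prime to $p$, applied here to $n\equiv3\pmod 8$.)
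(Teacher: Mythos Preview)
Your proof is correct and follows essentially the same route as the paper: build a weight-$\tfrac32$ form from $\theta^3=T(z)^3$, observe that its $q^3$-coefficient is $r_3(3)=24H(-3)=8$, and use the ``moreover'' clause of Theorem~\ref{main1} with $\ell=3$ (and $N$ a $2$-power) to rule out alternative~(2), so that Property~A must hold.

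The only difference is the choice of auxiliary form. The paper takes the $\left(\tfrac{4}{\cdot}\right)$-twist of $T(z)^3$, obtaining $G(z)\in M_{3/2}(\Gamma_0(16))$ whose odd-indexed coefficients are a mix of $12H(-4n)$ (for $n\equiv1\pmod4$) and $24H(-n)$ (for $n\equiv3\pmod8$). You instead sieve $T(z)^3$ all the way down to $n\equiv3\pmod8$, landing on a higher $2$-power level but with coefficients uniformly equal to $24H(-n)$. Your version makes the passage from Property~A to the count of $n$ with $H(n)\equiv r\pmod p$ completely transparent (just rescale by $24^{-1}$), whereas the paper's terse ``this gives the complete proof'' leaves the reader to sort out the two residue classes with their different scalings. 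Both arguments are valid; yours is a bit cleaner at the cost of a larger level, which is harmless since only $\gcd(N,p)=1$ matters.
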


 We also use the main theorem to study an analogue of
Newman's conjecture for overpartitions. Newman's conjecture
concerns the distribution of the ordinary partition function
modulo primes $p$.
\begin{con}
Let $P(n)$ be an ordinary partition function. If $M$ is a positive
integer, then for every integer $r$ there are infinitely many
nonnegative integer $n$ for which $P(n)\equiv r \pmod{M}$.
\end{con}
\noindent This conjecture was already studied by many
mathematicians (see Chapter 5. in \cite{O}). The overpartition of
a natural number $n$ is a partition of $n$ in which the first
occurrence of a number may be overlined. Let $\bar{P}(n)$ be the
number of the overpartition of an integer $n$. As an analogue of
Newman's conjecture, the following theorem gives a distribution
property of $\bar{P}(n)$ modulo odd primes $p$.
\begin{thm}\label{main4} Suppose that $p \geq 5$ is a prime such that
$p\equiv 2 \pmod{3}$. Then, for every integer $r$,
  $$
  \sharp\{\; 1 \leq n \leq X \; | \; \bar{P}(n) \equiv r
\pmod{p}\}
  \gg_{r,p}\left\{\begin{array}{cl}
  \frac{\sqrt{X}}{logX} \;\;\;&\text{ if } r \not \equiv  0 \pmod{p}, \\
  X \quad \;\;&\text{ if } r \equiv 0 \pmod{p}.
\end{array}\right.
$$
\end{thm}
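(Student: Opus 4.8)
The plan is to realize $\sum \bar P(n) q^n$ as (essentially) the $q$-expansion of a weakly holomorphic modular form of half-integral weight whose integral Fourier coefficients we can bring under the scope of Theorem~\ref{main1}, and then to use the hypothesis $p\equiv 2\pmod 3$ to rule out the exceptional ``lacunary'' alternative (\ref{form}). Recall the classical generating function
\[
\sum_{n=0}^{\infty}\bar P(n)q^n=\prod_{n=1}^{\infty}\frac{1+q^n}{1-q^n}=\frac{\eta(2z)}{\eta(z)^2},
\]
with $q=e^{2\pi i z}$. First I would multiply by a suitable power of $\eta$ (or of $\Delta$) to clear the denominator and produce a genuine holomorphic cusp form: a standard choice is to consider $\eta(z)^{2}\cdot\frac{\eta(2z)}{\eta(z)^2}=\eta(2z)$ — too trivial — so instead one takes $f(z):=\frac{\eta(2z)}{\eta(z)^2}\cdot\eta(24z)^{r}$ for an appropriate $r$, or more to the point one works with the form already appearing in the singular-moduli application, since $h(z)=\frac{\eta(z)^2}{\eta(2z)}\cdot\frac{E_4(4z)}{\eta(4z)^6}$ has $\frac{\eta(z)^2}{\eta(2z)}$ as a factor and the reciprocal of our overpartition series sits inside it. The cleaner route: multiply $\sum\bar P(n)q^n$ by $\eta(8z)^{k}$ for $k$ large enough that $g(z):=\eta(8z)^{k}\sum\bar P(n)q^{n}\in S_{\lambda+\frac12}(\Gamma_0(N),\chi)\cap\mathbb Z[[q]]$ for some level $N$ divisible by $4$, some integer $\lambda$, and a real (in fact trivial or quadratic) character $\chi$. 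Because $\eta(8z)^{k}=q^{k/3}\prod(1-q^{8n})^{k}\equiv q^{k/3}\pmod{\text{nothing}}$ only formally, one picks $k$ a multiple of $3$ so that the shift is integral and the product has integer coefficients; the coefficients of $g$ are then integral combinations of the $\bar P(n)$.

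Next I would verify the hypothesis of Theorem~\ref{main1}: there must be some $n$ with $\gcd(a_g(n),p)=1$ and $\gcd(n,p)=1$. Since $\bar P(0)=1$ and the leading coefficient of $g$ is $1$ (up to the $q$-power shift), the first nonzero coefficient is $\pm1$, which is a unit mod $p$; one only has to check its $q$-exponent is coprime to $p$, which can be arranged by the choice of $k$ (or handled by noting $\bar P(1)=2$, $\bar P(2)=4$, etc., and $p\geq5$). Theorem~\ref{main1} then gives the dichotomy for $\theta^{p-1}(g)$, and by the Remark $\theta^{p-1}(g)\equiv\sum_{p\nmid n}a_g(n)q^n\pmod p$. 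In case (1), the coefficients of $\theta^{p-1}(g)$ satisfy Property~A for $p$; since these coefficients are (on the $n$ coprime to $p$) exactly the $\bar P$-values times the unit coming from $\eta(8z)^k$ reduced mod $p$, shifted by a fixed exponent, transporting the counting estimate back to $\bar P$ is routine bookkeeping and yields exactly the asserted bound $\gg_{r,p}\sqrt X/\log X$ (resp. $\gg X$).

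The main obstacle is case (2): one must show that the congruence
\[
\theta^{p-1}(g(z))\equiv\sum_{i=1}^{t}\sum_{m=0}^{\infty}a_g(n_i m^2)q^{n_i m^2}\pmod p
\]
cannot hold, i.e. the overpartition series is not $p$-adically supported on finitely many square classes. This is exactly where $p\equiv2\pmod3$ enters. I would argue by looking at the Euler product $\prod\frac{1+q^n}{1-q^n}$ and its behavior mod $p$: either use the second half of Theorem~\ref{main1} (the constraint $p\mid(\ell-1)\ell(\ell+1)N$ or $\ell\mid N$ on primes $\ell\mid n_i$, when $\gcd(4N,p)=1$) to bound the possible $n_i$, then exhibit a single prime $\ell$ in a suitable residue class — available because $p\equiv2\pmod3$ means $-3$ is a non-residue mod $p$, which controls the relevant quadratic conditions — for which $\bar P$ on the arithmetic progression $\ell n$ is visibly nonzero mod $p$ and not of the square-supported shape; or, more robustly, invoke the structure theorem of Bruinier--Ono behind (\ref{2.1}) together with a Hecke-operator argument: if $\theta^{p-1}(g)$ were square-supported it would be annihilated mod $p$ by the half-integral weight Hecke operators $T(\ell^2)$ for all $\ell$ in a positive-density set with $\left(\frac{\cdot}{\ell}\right)$ prescribed, forcing a congruence incompatible with $\bar P(0)=1,\ \bar P(1)=2$ once $p\equiv2\pmod3$. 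Establishing this incompatibility cleanly — choosing the right auxiliary prime $\ell$ and checking one explicit nonvanishing of an overpartition coefficient modulo $p$ in the relevant progression — is the one genuinely delicate point; everything else is an application of Theorem~\ref{main1} plus elementary manipulation of $\eta$-quotients.
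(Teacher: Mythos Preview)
Your proposal has a genuine gap at the very first step: multiplying $W(z)=\sum\bar P(n)q^n=\eta(2z)/\eta(z)^2$ by a power $\eta(8z)^k$ (or any $\eta$-product that is not $\equiv 1\pmod p$) does \emph{not} produce a form whose $n$th coefficient is ``the $\bar P$-value times a unit, shifted by a fixed exponent.'' The coefficients of $g=\eta(8z)^k\cdot W$ are honest convolutions $\sum_j c_j\,\bar P(n-j)$ with the $c_j$ the coefficients of $\eta(8z)^k$, and Property~A for the sequence $a_g(n)$ says nothing about Property~A for $\bar P(n)$. So even if you succeed in making $g$ holomorphic and applying Theorem~\ref{main1}, you cannot ``transport the counting estimate back to $\bar P$'' as claimed. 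Your treatment of alternative~(\ref{form}) is also only a sketch of a hope: you never name the witness coefficient or the prime $\ell$, and the Hecke-annihilation argument you allude to would again only constrain $g$, not $\bar P$.

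The paper avoids this by choosing the multiplier so that it is congruent to $1$ modulo $p$. Concretely, it sets $F_p(z)=\eta(4z)^{p^2}/\eta(4p^2z)$, which satisfies $F_p\equiv 1\pmod p$, and builds
\[
G(z)=\Bigl(W(z)-\Bigl(\tfrac{-1}{p}\Bigr)W_{\chi_p}(z)\Bigr)\,F_p(z)^{p^\beta},
\]
so that $G\equiv 2\sum_{(\frac{-n}{p})=-1}\bar P(n)q^n+\sum_{p\mid n}\bar P(n)q^n\pmod p$; the Fourier coefficients really are the $\bar P(n)$ themselves (up to the harmless factor $2$). The substantive work is a cusp-by-cusp computation showing that the twisted difference $W-(\tfrac{-1}{p})W_{\chi_p}$ is already holomorphic at the cusps $1/p^2$ and that $F_p^{p^\beta}$ kills the remaining poles, so $G$ is holomorphic on $\Gamma_0(16p^2)$; Proposition~\ref{reduction} then drops the level to $16$. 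To exclude alternative~(\ref{form}) one simply observes $\bar P(3)=8$: since $p\equiv 2\pmod 3$ gives $(\frac{-3}{p})=-1$, the coefficient of $q^3$ in $G$ is $16\not\equiv 0\pmod p$, forcing $3$ among the $n_i$; but $\ell=3$ satisfies neither $p\mid(\ell-1)\ell(\ell+1)\cdot 16$ nor $\ell\mid 16$, contradicting Theorem~\ref{main1}. The missing idea in your attempt is precisely this ``$\equiv 1\pmod p$'' multiplier (together with the $\chi_p$-twist that makes the pole cancellation work).
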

\begin{rem}
When $r\equiv 0 \pmod{p}$, Theorem \ref{main2}, \ref{main3} and
\ref{main4} were proved in \cite{A-O} and \cite{Tr}.
\end{rem}
Next sections are detailed proofs of theorems: Section 2 gives a
proof of Theorem \ref{main1}. In Section 3, we give the proofs of
Theorem \ref{main2}, \ref{main3}, and \ref{main4}.

\section{\bf Proof of Theorem \ref{main1}}
We begin by stating the following theorem proved in \cite{B-O}.
\begin{thr}[\cite{B-O}]\label{B-O}
Let $\lambda$ be a non-negative integer. Suppose that
$g(z)=\sum_{n=0}^{\infty}a_g(n)q^n\in
S_{\lambda+\frac{1}{2}}(\Gamma_0(4N),\chi)\cap \mathbb{Z}[[q]],$
where $\chi$ is a real Dirichlet character. If $p$ is an odd prime
and a positive integer $n$ exists for which $\gcd(a_g(n),p)=1$,
then at least one of the following is true:
\begin{enumerate}
\item If $0\leq r <p$, then
$$ \sharp\{\; 1 \leq n \leq X \; | \; a_g(n) \equiv r \pmod{p}\}
  \gg_{r,M}\left\{\begin{array}{cl}
  \frac{\sqrt{X}}{logX} \;\;\;& \text{ if } r \not \equiv 0 \pmod{p}, \\
  X \quad \;\;&\text{ if } r \equiv 0 \pmod{p}.
\end{array}\right.
$$
\item There are finitely many square-free integers $n_1$, $n_2,
\cdots,n_t$ for which
\begin{equation}\label{2.1}
g(z)\equiv \sum_{i=1}^t\sum_{m=0}^{\infty}a_g(n_i m^2)q^{n_i m^2}
\pmod{p}.\end{equation} Moreover, if $\gcd(p,4N)=1$, $\epsilon \in
\{\pm1\}$, and $\ell \nmid 4Np$ is a prime with
$\left(\frac{n_i}{\ell}\right) \in \{0, \epsilon \}$ for $1\leq i
\leq t$, then $(\ell-1)g(z)$ is an eigenform modulo $p$ of the
half-integral weight Hecke operator $T(\ell^2, \lambda,\chi)$. In
particular, we have
\begin{equation}\label{hecke}
(\ell-1)g(z)|T(\ell^2,\lambda,\chi)\equiv \epsilon
\chi(p)\left(\frac{(-1)^{\lambda}}{\ell}\right)\left
(\ell^{\lambda}+\ell^{\lambda-1}\right)(\ell-1)g(z) \pmod{p}.
\end{equation}
\end{enumerate}
\end{thr}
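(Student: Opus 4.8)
The plan is to analyze the reduction $\bar g$ of $g$ modulo $p$ as a modular form over $\overline{\mathbb F}_p$ and to read the dichotomy off the Hecke action. First I would consider the finite-dimensional space spanned by the Hecke translates of $\bar g$ under the operators $T(\ell^2,\lambda,\chi)$ with $\ell \nmid 4Np$. Because the Hecke algebra acting on this space is a finite-dimensional commutative $\overline{\mathbb F}_p$-algebra, I can take its primary (generalized eigenspace) decomposition and work with the system of eigenvalues $(\lambda_\ell)_{\ell}$ attached to a mod $p$ Hecke eigenform occurring in $\bar g$. Via the Shimura correspondence these eigensystems come from integral weight forms, to which Deligne's construction (and Deligne--Serre in the exceptional low-weight cases) attaches continuous two-dimensional mod $p$ Galois representations $\rho$; the two alternatives of the theorem will correspond to whether the images of these $\rho$ are large or degenerate.

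The computational engine is the explicit action of $T(\ell^2,\lambda,\chi)$ on Fourier expansions (cf. (\ref{Hecke}) or Koblitz VI.3): for $g=\sum a_g(n)q^n$,
$$ a_{g|T(\ell^2)}(n)=a_g(\ell^2 n)+\chi(\ell)\left(\frac{(-1)^\lambda n}{\ell}\right)\ell^{\lambda-1}a_g(n)+\chi(\ell)^2\ell^{2\lambda-1}a_g(n/\ell^2), $$
where $a_g(n/\ell^2)=0$ unless $\ell^2\mid n$. For a Hecke eigenform mod $p$ the left-hand side equals $\lambda_\ell\, a_g(n)$, so fixing an $n_0$ with $\gcd(n_0,\ell)=1$ and $a_g(n_0)$ a unit mod $p$ turns this into an affine-linear expression for $a_g(\ell^2 n_0)$ in terms of $\lambda_\ell$. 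Since $\lambda_\ell$ is, up to the explicit normalization above, the trace of $\rho(\mathrm{Frob}_\ell)$, the Chebotarev density theorem shows that the values $\lambda_\ell$ (hence the residues $a_g(\ell^2 n_0)\bmod p$) equidistribute over primes $\ell$ according to the conjugacy-class structure of the finite image of $\rho$.

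To obtain alternative (1), I would combine this equidistribution with a counting argument. When the image of $\rho$ is large, the affine image of the trace map covers every nonzero residue class with positive density, so for each $r\not\equiv 0$ a positive proportion of primes $\ell$ satisfy $a_g(\ell^2 n_0)\equiv r \pmod p$; taking $n=\ell^2 n_0\le X$ then produces $\gg_{r,p}\sqrt{X}/\log X$ admissible $n$ by the prime number theorem, which is the stated bound. For $r\equiv 0$ the same equidistribution shows that the set of $n$ with $a_g(n)\equiv 0$ has positive density, giving the bound $\gg X$. Conversely, if Property A fails for even one nonzero residue then no $\rho$ can have large image, and by the theory of modular forms modulo $p$ this degeneracy forces $\bar g$ to be supported on finitely many square classes, i.e. $\bar g\equiv\sum_{i=1}^t\sum_m a_g(n_im^2)q^{n_im^2}$, which is (\ref{2.1}). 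The eigenform statement in (2) is then read off the displayed $T(\ell^2)$ action restricted to this support: under the hypothesis $\left(\frac{n_i}{\ell}\right)\in\{0,\epsilon\}$ the Legendre factor collapses to the constant $\epsilon\left(\frac{(-1)^\lambda}{\ell}\right)$ independent of $i$, and after normalizing by $(\ell-1)$ the three terms reorganize into the scalar on the right-hand side of (\ref{hecke}) times $(\ell-1)\bar g$.

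The main obstacle is precisely the structural step: showing that the failure of the large-image condition forces the lacunary square-class form (\ref{2.1}), and that ``not of the form (\ref{2.1})'' matches exactly the regime in which the trace map surjects so that \emph{every} nonzero residue is attained. This is the half-integral weight analogue of Serre's and Swinnerton-Dyer's classification of exceptional mod $p$ modular forms, transported through the Shimura lift; one must enumerate the degenerate images (reducible, dihedral/CM, and the small exceptional groups) and verify that each corresponds to support on finitely many square classes rather than to some larger exceptional family. Once that classification is in hand, the Chebotarev equidistribution and the elementary counting that converts positive prime density into the $\sqrt{X}/\log X$ and $X$ lower bounds are comparatively routine.
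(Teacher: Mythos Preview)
The paper does not prove this statement at all: Theorem~\ref{B-O} is quoted verbatim from Bruinier--Ono \cite{B-O} and is used as a black box in the proof of Theorem~\ref{main1}. There is therefore no ``paper's own proof'' to compare your proposal against.

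That said, your outline is a fair sketch of the strategy actually used in \cite{B-O}: pass via the Shimura lift to integral weight, attach mod $p$ Galois representations, and use Chebotarev on the image of Frobenius traces to hit every residue class with the required density. The point you flag as the main obstacle --- that failure of the large-image condition forces the lacunary square-class shape (\ref{2.1}) --- is exactly the substantive content of the Bruinier--Ono argument, and your sketch does not supply it. In \cite{B-O} this is handled by a careful case analysis of the possible degenerate images combined with the explicit Hecke recursion you wrote down; without that analysis your proposal is an outline rather than a proof. The derivation of the eigenvalue in (\ref{hecke}) from the Hecke formula under the Legendre-symbol hypothesis is, as you indicate, a direct computation once (\ref{2.1}) is known.
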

Recall that $f(z)=\sum_{n=0}^{\infty}a(n)q^n\in
M_{\lambda+\frac{1}{2}}(\Gamma_0(4N),\chi)\cap \mathbb{Z}[[q]]$.
Thus, to apply Theorem \ref{B-O}, we show that there exists a cusp
form $\widetilde{f}(z)$ such that $\widetilde{f}(z)\equiv
\theta^{p-1}(f(z)) \pmod{p}$ for a prime $p\geq5$.
\begin{lem}\label{l-main1}
Suppose that $p \geq 5$ is a prime and
$$f(z)=\sum_{n=0}^{\infty}a(n)q^n \in
M_{\lambda+\frac{1}{2}}(\Gamma_0(N),\chi)\cap\mathbb{Z}[[q]].$$
Then, there exists a cusp form $\widetilde{f}(z)\in
S_{\lambda+(p+1)(p-1)+\frac{1}{2}}(\Gamma_0(N),\chi)\cap\mathbb{Z}[[q]]$
such that $$\widetilde{f}(z)\equiv \theta^{p-1}(f(z)) \pmod{p}.$$
\end{lem}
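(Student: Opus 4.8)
The plan is to realize $\theta^{p-1}(f)$ modulo $p$ as a genuine (indeed cuspidal) modular form by iterating the first Rankin--Cohen bracket against the weight $p-1$ Eisenstein series $E_{p-1}$, exploiting the classical congruence $E_{p-1}\equiv 1\pmod p$. Recall (see (\ref{cohen})) the first Rankin--Cohen bracket, normalized so as to use $\theta$ in place of $\frac{1}{2\pi i}\frac{d}{dz}$: if $h$ has weight $k$ (allowing $k$ half-integral, with the theta-multiplier on $\Gamma_0(N)$ and character $\chi$) and $g$ has integral weight $l$ on $\mathrm{SL}_2(\mathbb{Z})$, then
\[
[h,g]:=k\,h\,\theta(g)-l\,\theta(h)\,g
\]
is a modular form of weight $k+l+2$ on $\Gamma_0(N)$ with the same theta-multiplier and character $\chi$. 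Crucially, $[h,g]$ is always a cusp form: since $\theta$ annihilates constant terms, the constant term of $[h,g]$ at $\infty$ is $k\,h_0\cdot 0-l\cdot 0\cdot g_0=0$, and since the bracket is equivariant for the slash action, $[h,g]\big|_{k+l+2}\gamma=[h\big|_k\gamma,\,g\big|_l\gamma]$, the same computation kills the constant term at every (regular) cusp, while at irregular cusps holomorphy already forces vanishing.

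Next, by the von Staudt--Clausen theorem $v_p(B_{p-1})=-1$, so every nonconstant Fourier coefficient of the normalized $E_{p-1}$ is $p$-integral and divisible by $p$; hence $E_{p-1}\equiv 1\pmod p$ and therefore $\theta(E_{p-1})\equiv 0\pmod p$. Now set $g_0:=f$, and inductively, for $1\le j\le p-1$,
\[
g_j:=2\,[g_{j-1},E_{p-1}]=\bigl(2\lambda+1+2(j-1)(p+1)\bigr)\,g_{j-1}\,\theta(E_{p-1})-2(p-1)\,\theta(g_{j-1})\,E_{p-1}.
\]
The factor $2$ clears the half-integer weight, so each $g_j$ has $p$-integral coefficients, and by the previous paragraph $g_j\in S_{\lambda+j(p+1)+\frac{1}{2}}(\Gamma_0(N),\chi)$. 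Reducing mod $p$ and using $\theta(E_{p-1})\equiv 0$, $E_{p-1}\equiv 1$, $2(p-1)\equiv-2$ gives $g_j\equiv 2\,\theta(g_{j-1})\pmod p$; since $\theta$ commutes with reduction mod $p$, induction yields $g_j\equiv 2^{j}\,\theta^{j}(f)\pmod p$. Taking $j=p-1$ and using Fermat's little theorem ($2^{p-1}\equiv 1\pmod p$), we obtain a cusp form $g_{p-1}\in S_{\lambda+(p+1)(p-1)+\frac{1}{2}}(\Gamma_0(N),\chi)$ with $p$-integral coefficients and $g_{p-1}\equiv\theta^{p-1}(f)\pmod p$. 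Finally, since $\chi$ is real the space $S_{\lambda+(p+1)(p-1)+\frac{1}{2}}(\Gamma_0(N),\chi)$ has a basis of forms in $\mathbb{Z}[[q]]$; expressing $g_{p-1}$ in this basis (the coordinates are automatically $p$-integral) and replacing each coordinate by an integer representative of its class mod $p$ produces $\widetilde{f}\in S_{\lambda+(p+1)(p-1)+\frac{1}{2}}(\Gamma_0(N),\chi)\cap\mathbb{Z}[[q]]$ with $\widetilde{f}\equiv\theta^{p-1}(f)\pmod p$.

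The step needing the most care is the very first one: one must verify that the Rankin--Cohen bracket formalism — modularity of $[h,g]$ of the predicted weight, its multiplier and character, and its equivariance under the slash action — genuinely extends to the case where $h$ has half-integral weight and level divisible by $4$ while $g$ is an integral-weight form of level $1$; this is exactly what legitimizes both the weight count $\lambda+\tfrac12\mapsto\lambda+\tfrac12+(p-1)(p+1)$ and the vanishing at all cusps that gives cuspidality. Granting the standard half-integral Rankin--Cohen formalism, everything else is bookkeeping, the only arithmetic wrinkles being the possible failure of $E_{p-1}$ to have honestly integral (as opposed to merely $p$-integral) coefficients, which is absorbed by passing to a $\mathbb{Z}$-structure at the end, and the spurious scalars $2$ and $p-1$, which are absorbed via Fermat.
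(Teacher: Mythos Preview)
Your argument is correct and is essentially the paper's own proof: iterate the first Rankin--Cohen bracket against $E_{p-1}$, use $E_{p-1}\equiv 1\pmod p$ to reduce each bracket to $\theta$ modulo $p$, and repeat $p-1$ times. You handle the bookkeeping (the factor of $2$ to clear the half-integer weight, Fermat for $2^{p-1}$, and the passage from merely $p$-integral to genuinely integral coefficients via a $\mathbb{Z}$-basis) more carefully than the paper, which simply writes $[E_{p-1},f]_1\equiv\theta(f)\pmod p$ and cites Cohen for cuspidality, but the strategy is identical.
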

\begin{proof}[Proof of Lemma \ref{l-main1}]

For $F(z)\in M_{\frac{k_1}{2}}(\Gamma_0(N),\chi_1)$ and $G(z)\in
 M_{\frac{k_2}{2}}(\Gamma_0(N),\chi_2)$, let
\begin{equation}\label{cohen}
[F(z),G(z)]_1:=\frac{k_2}{2}\theta (F(z))\cdot G(z)-\frac{k_1}{2}
F(z)\cdot \theta
 (G(z)).
\end{equation}
This operator is referred to as a Rankin-Cohen 1-bracket, and it
was proved in \cite{C} that $$[F(z),G(z)]_1 \in
 S_{\frac{k_1+k_2}{2}+2}(\Gamma_0(N),\chi_1\chi_2\chi'),$$ where
 $\chi'=1$ if $\frac{k_1}{2}$ and $\frac{k_2}{2} \in \mathbb{Z}$,
 $\chi'(d)=\left(\frac{-4}{d}\right)^{\frac{k_i}{2}}$ if $\frac{k_i}{2}\in
 \mathbb{Z}$ and $\frac{k_{3-i}}{2}\in \frac{1}{2}+\mathbb{Z}$, and
 $\chi'(d)=\left(\frac{-4}{d}\right)^{\frac{k_1+k_2}{2}}$ if $\frac{k_1}{2}$ and
  $\frac{k_{2}}{2}\in \frac{1}{2}+\mathbb{Z}$.

For even $k\geq 4$, let
$$E_{k}(z):=1-\frac{2k}{B_k}\sum_{n=1}^{\infty}\sum_{d|n}d^{k-1}q^n$$ be the usual
normalized Eisenstein series of weight $k$. Here, the number $B_k$
denotes the $k$th Bernoulli number. The function $E_k(z)$ is a
modular form of weight $k$ on $SL_2(\mathbb{Z})$, and
\begin{equation}\label{Eis}
E_{p-1}(z) \equiv 1 \pmod{p}
\end{equation}
(see \cite{L}). From (\ref{cohen}) and (\ref{Eis}), we have
$$[E_{p-1}(z),f(z)]_1\equiv \theta(f(z)) \pmod{p}$$ and
$[E_{p-1}(z),f(z)]_1 \in
S_{\lambda+p+1+\frac{1}{2}}(\Gamma_0(N),\chi)$. Repeating this
method $p-1$ times, we complete the proof.
\end{proof}
Using the following lemma, we can deal with the divisibility of
$a_g(n)$ for positive integers $n$, $p \nmid n$, where
$g(z)=\sum_{n=1}^{\infty}a_g(n)q^n \in
S_{\lambda+\frac{1}{2}}(\Gamma_0(N),\chi)\cap \mathbb{Z}[[q]]$.
\begin{lem}[see Chapter 3 in \cite{O}]\label{2-3} Suppose that
$g(z)=\sum_{n=1}^{\infty}a_g(n)q^n\in
S_{\lambda+\frac{1}{2}}(\Gamma_0(N),\chi)$ has coefficients in
$\mathcal{O}_K$, the algebraic integers of some number field $K$.
Furthermore, suppose that $\lambda \geq 1$ and that $\textbf{m}
\subset \mathcal{O}_K$ is an ideal norm $M$.
\begin{enumerate}
\item Then, a positive proportion of the primes $Q \equiv -1
\pmod{4MN}$ has the property that
$$g(z)|T(Q^2,\lambda,\chi)\equiv0 \pmod{\textbf{m}}.$$
\item Then a positive proportion of the primes $Q \equiv 1
\pmod{4MN}$ has the property that
$$g(z)|T(Q^2,\lambda,\chi)\equiv 2g(z) \pmod{\textbf{m}}.$$
\end{enumerate}
\end{lem}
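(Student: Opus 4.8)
The plan is to transport the question, via Shimura's correspondence, to the $\mathbf{m}$-adic Galois representations attached to integral weight Hecke eigenforms, and then to invoke the Chebotarev density theorem. Let $\ell$ be the rational prime below $\mathbf{m}$, so that $\ell\mid M$; since $Q\equiv\pm1\pmod{4MN}$ forces $\gcd(Q,4N\ell)=1$, only the ``good'' Hecke operators will intervene. First I would reduce to eigenforms: the reduction $\overline{g}$ of $g$ modulo $\mathbf{m}$ lies in the finite-dimensional space of mod-$\mathbf{m}$ cusp forms of weight $\lambda+\frac12$ on $\Gamma_0(N)$ with character $\chi$, on which the commuting operators $T(Q^2,\lambda,\chi)$, $Q\nmid 4N\ell$, act; passing to the Hecke-stable span of $\overline{g}$ and decomposing it into generalized eigenspaces, it is enough to produce one positive-density set of primes $Q$ that works simultaneously for each of the finitely many eigensystems that occur, so one may assume $g\mid T(Q^2,\lambda,\chi)\equiv\omega(Q)\,g\pmod{\mathbf{m}}$ for all $Q\nmid 4N\ell$.

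Next I would lift through Shimura: for such a $g$, Shimura's correspondence produces an integral weight Hecke eigenform $F$ of weight $2\lambda$ and nebentypus $\chi^2$ on some $\Gamma_0(N')$ with $a_F(Q)\equiv\omega(Q)\pmod{\mathbf{m}}$ for all $Q\nmid N'$. Since $\lambda\geq1$ one has $2\lambda\geq2$, so by Deligne (and Eichler--Shimura when $2\lambda=2$) there is a continuous representation $\overline{\rho}_F\colon\operatorname{Gal}(\overline{\mathbb{Q}}/\mathbb{Q})\to\operatorname{GL}_2(\mathcal{O}_K/\mathbf{m})$, unramified outside $N'\ell$, with $\operatorname{tr}\overline{\rho}_F(\operatorname{Frob}_Q)\equiv a_F(Q)$ and $\det\overline{\rho}_F(\operatorname{Frob}_Q)\equiv\chi^2(Q)Q^{2\lambda-1}$ for $Q\nmid N'\ell$, which is odd: $\overline{\rho}_F(c)$ has eigenvalues $1$ and $-1$, where $c$ is complex conjugation. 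When $F$ is Eisenstein or an oldform one takes the corresponding reducible $\overline{\rho}_F$, for which $\operatorname{tr}\overline{\rho}_F(c)=0$ still holds.

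Then I would apply Chebotarev. Let $L$ be the compositum of $\mathbb{Q}(\zeta_{4MN})$ with the fixed fields $\overline{\mathbb{Q}}^{\ker\overline{\rho}_F}$ over the finitely many $F$ above. For part (1), a positive proportion of primes $Q$ will have $\operatorname{Frob}_Q$ acting on each $\overline{\mathbb{Q}}^{\ker\overline{\rho}_F}$ as the conjugacy class of $\overline{\rho}_F(c)$ and on $\mathbb{Q}(\zeta_{4MN})$ as $c$ does, the latter meaning precisely $Q\equiv-1\pmod{4MN}$; for such $Q$ one gets $\omega(Q)\equiv a_F(Q)\equiv\operatorname{tr}\overline{\rho}_F(c)=0$, and feeding $\omega(Q)\equiv0$ together with $Q\equiv-1\pmod{4MN}$ into the standard $q$-expansion formula for $T(Q^2,\lambda,\chi)$ makes every Fourier coefficient of $g\mid T(Q^2,\lambda,\chi)$ vanish modulo $\mathbf{m}$. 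For part (2) one chooses instead $\operatorname{Frob}_Q$ trivial on each $\overline{\mathbb{Q}}^{\ker\overline{\rho}_F}$ and on $\mathbb{Q}(\zeta_{4MN})$, i.e.\ $Q\equiv1\pmod{4MN}$ and $a_F(Q)\equiv\operatorname{tr}(I)=2$; this set again has positive density, and then $\omega(Q)\equiv2$ together with $Q\equiv1\pmod{4MN}$ forces $g\mid T(Q^2,\lambda,\chi)\equiv2g\pmod{\mathbf{m}}$.

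The hard part will be the bookkeeping for non-eigenforms and for small weight. One must verify that the generalized-eigenspace decomposition, including the nilpotent parts occurring in it, really does reduce the operator congruences $T(Q^2,\lambda,\chi)\equiv0$ and $\equiv2$ to finitely many trace conditions on the $\overline{\rho}_F$, which is cleanest after lifting each mod-$\mathbf{m}$ eigensystem to characteristic zero by Deligne--Serre; and one must check that in the edge case $\lambda=1$, where the Shimura lift has weight $2$ and may fail to be a newform, one still obtains a two-dimensional odd $\overline{\rho}_F$ with $\operatorname{tr}\overline{\rho}_F(c)=0$, so that the Chebotarev step goes through verbatim. Verifying that the congruence on $\omega(Q)$ together with the congruence on $Q$ really kills (resp.\ halves) every coefficient also requires a little care with the middle term of the Hecke formula, but that is a routine computation once the class of $\operatorname{Frob}_Q$ has been fixed.
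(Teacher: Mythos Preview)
The paper does not prove this lemma at all; it is quoted verbatim from Ono's CBMS monograph \cite{O} and used as a black box. Your outline via the Shimura correspondence, the mod-$\mathbf{m}$ Galois representations attached to the integral-weight lifts, and the Chebotarev density theorem is exactly the argument given there (which in turn goes back to Serre), so there is nothing to compare on the paper's side. Your identification of the delicate point---that for a non-eigenform one must control the nilpotent part of the Hecke action, which is handled most cleanly by passing to the ``big'' Galois representation into $\mathrm{GL}_2$ of the Hecke algebra modulo $\mathbf{m}$ rather than working one eigensystem at a time---is accurate, and once that is in place the rest of your sketch goes through.
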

\noindent We can now prove Theorem \ref{main1}.
\begin{proof}[Proof of Theorem \ref{main1}]
From Lemma \ref{l-main1}, there exists a cusp form
$$\widetilde{f}(z)\in
S_{\lambda+(p+1)(p-1)+\frac{1}{2}}(\Gamma_0(N),\chi)\cap\mathbb{Z}[[q]]$$
such that $$\widetilde{f}(z)\equiv \theta^{p-1}(f(z)) \pmod{p}.$$
Note that, for $F(z)=\sum_{n=0}^{\infty}a_F(n)q^n \in
M_{k+\frac{1}{2}}(\Gamma_0(N),\chi)$ and each prime $Q \nmid N$,
the half-integral weight Hecke operator $T(Q^2,\lambda,\chi)$ is
defined as
\begin{equation}\label{Hecke}
\begin{array}{l}
  F(z)|T(Q^2,k,\chi) \\
  \qquad \quad:=\sum_{n=0}^{\infty}\left(a_F(Q^2 n)+
\chi^*(Q)\left(\frac{n}{Q}\right)Q^{k-1}a_F(n)
+\chi^*(Q^2)Q^{2k-1}a_F\left(n/Q^2\right)\right)q^n,
\end{array}
\end{equation}
 where
$\chi^*(n):=\chi^*(n)\left(\frac{(-1)^{k}}{n}\right)$ and
$a_F(n/Q^2)=0$ if $Q^2 \nmid n$. If $F(z)|T(Q^2,k,\chi)\equiv0
\pmod{p}$ for a prime $Q \nmid N$, then we have \begin{align*}
a_F(Q^2\cdot Qn)&+
\chi^*(Q)\left(\frac{Qn}{Q}\right)Q^{k-1}a_F(Qn)
+\chi^*(Q^2)Q^{2k-1}a_F\left(Qn /Q^2\right)\\
&\equiv a_F(Q^3n) \equiv 0 \pmod{p}
\end{align*}
  for every positive
integer $n$ such that $\gcd(Q,n)=1$. Thus, we have the following
by Lemma \ref{2-3}-(1): $$\sharp\{\; 1 \leq n \leq X \; | \; a(n)
\equiv 0 \pmod{p} \text{ and } \gcd(p,n)=1\} \gg X.$$

We apply Theorem \ref{B-O} with $\widetilde{f}(z)$. Then the
purpose of the remaining part of the proof is to show the
following: if $\gcd(p,4N)=1$, an odd prime $\ell$ divides some
$n_i$, and
\begin{equation}
\theta^{p-1}(f(z))\equiv \sum_{i=1}^t\sum_{m=0}^{\infty}a(n_i
m^2)q^{n_i m^2} \pmod{p},\end{equation} then
$p|(\ell-1)\ell(\ell+1)N$ or $\ell \mid N$. We assume that there
exists a prime $\ell_1$ such that $\ell_1 | n_1$, $p \nmid
(\ell_1-1)\ell_1(\ell_1+1)N$ and $\ell \mid N$. We also assume
that $n_t=1$ and that $n_i \nmid n_1$ for every $i$, $2\leq i\leq
t-1$. Then, we can take a prime $\ell_i$ for each $i$, $2 \leq i
\leq t-1$, such that $\ell_i|n_i$ and $\ell_i \nmid n_1$. For
convention, we define
$$\left(\frac{n}{2}\right):=\left\{\begin{array}{cl}
  (-1)^{(n-1)^2/8} \quad&\text{if } n \text{ is odd,}\\
  0                \quad&\text{otherwise},
\end{array} \right.$$ and $\chi_{Q}(d):=\left(\frac{d}{Q} \right)$
for a prime $Q$. Let $\psi(d):=\prod_{i=2}^{t-1}\chi_{\ell_i}(d).$
We take a prime $\beta$ such that $\psi(n_1)\chi_{\beta}(n_1)=-1$.
If we denote the $\psi$-twist of $\widetilde{f}(z)$ by
$\widetilde{f}_{\psi}(z)$ and the $\psi\chi_{\beta}$-twist of
$\widetilde{f}(z)$ by $\widetilde{f}_{\psi\chi_{\beta}}(z)$, then
$$\widetilde{f}_{\psi\chi_{\beta}^2}(z)-\widetilde{f}_{\psi\chi_{\beta}}(z) \equiv
2\sum_{\gcd(m,\beta\prod_{j\geq2}\ell_{j})=1}a(n_1m^2)q^{n_1m^2}
\pmod{p}$$ and $\widetilde{f}_{\psi\chi_{\beta}}(z) \in
S_{\lambda+(p+1)(p-1)+\frac{1}{2}}(\Gamma_0(N\alpha^2\beta^2),\chi)\cap\mathbb{Z}[[q]]$
(see Chapter 3 in \cite{O}). Note that
$$\gcd(N\alpha^2\beta^2,p)=\gcd(N\alpha^2\beta^2,\ell_1)=1.$$ Thus,
$(\widetilde{f}_{\psi}(z)-\widetilde{f}_{\psi\chi_{\beta}}(z))|T(\ell_1^2,\lambda+(p+1)(p-1),\chi)$
satisfies the formula (\ref{hecke}) of Theorem \ref{B-O} for  both
of $\epsilon=1$ and $\epsilon=-1$. This results in a contradiction
since
$$(\widetilde{f}_{\psi}(z)-\widetilde{f}_{\psi\chi_{\beta}}(z))|T(\ell_1^2,\lambda+(p+1)(p-1),\chi)
\not \equiv  0 \pmod{p}$$ and $p\geq5$. Thus, we complete the
proof.
\end{proof}

\section{\bf Proofs of Theorem \ref{main2}, \ref{main3}, and \ref{main4}}
\subsection{Proof of Theorem \ref{main2}}

Note that $h(z)=\frac{\eta(z)^2}{\eta(2z)}\cdot
\frac{E_4(4z)}{\eta(4z)^6}$ is a meromorphic modular form. In
\cite{A-O} it was obtained a holomorphic modular form on
$\Gamma_0(4p^2)$ whose Fourier coefficients generate traces of
singular moduli modulo $p$ (see the formula (\ref{h-p}) and
(\ref{3.2})). Since the level of this modular form is not
relatively prime to $p$, we need the following proposition.
\begin{prop}[\cite{A-B}]\label{reduction}
Suppose that $p \geq 5$ is a prime. Also, suppose that $p \nmid
N$, $j\geq 1$ is an integer, and
$$g(z)=\sum_{n=1}^{\infty}a(n)q^n \in
S_{\lambda+\frac{1}{2}}(\Gamma_0(Np^j))\cap\mathbb{Z}[[q]].$$
Then, there exists a cusp form $G(z)\in
S_{\lambda'+\frac{1}{2}}(\Gamma_0(N))\cap\mathbb{Z}[[q]]$ such
that $$G(z)\equiv g(z) \pmod{p},$$ where
$\lambda'+\frac{1}{2}=(\lambda+\frac{1}{2})p^{j}+p^e(p-1)$ for a
sufficiently large $e\in \mathbb{N}$.
\end{prop}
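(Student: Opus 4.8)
The plan is to strip the prime power $p^{j}$ out of the level in two moves: first descend to $\Gamma_0(N)$ modulo $p$ at the cost of multiplying the weight by $p^{j}$, and then restore the prescribed weight by multiplying by a power of $E_{p-1}$. The second move is the easy one and uses (\ref{Eis}): since $E_{p-1}(z)\equiv 1\pmod p$ and $E_{p-1}^{p^{e}}\in M_{p^{e}(p-1)}(SL_2(\Z))\cap\Z[[q]]$ for every $e\ge 0$, it is enough to produce a cusp form $G_0\in S_{(\lambda+\frac12)p^{j}}(\Gamma_0(N))\cap\Z[[q]]$ with trivial character and $G_0\equiv g\pmod p$: then $G:=G_0\cdot E_{p-1}^{p^{e}}$ lies in $S_{\lambda'+\frac12}(\Gamma_0(N))\cap\Z[[q]]$ with $\lambda'+\frac12=(\lambda+\frac12)p^{j}+p^{e}(p-1)$, has integral coefficients and trivial character, and satisfies $G\equiv G_0\equiv g\pmod p$. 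Enlarging $e$ only enlarges the weight; this is the slack meant by ``sufficiently large $e$'', and it is convenient for making the auxiliary holomorphy and Sturm-type estimates in the first move uniform.

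For the first move I would argue as follows. Because $g$ has integral coefficients, Fermat's little theorem gives $g(z)^{p}\equiv g(z)\,|\,V_p\pmod p$, where $V_p\colon\sum a(n)q^n\mapsto\sum a(n)q^{pn}$; iterating, $g(z)^{p^{j}}\equiv g(z)\,|\,V_{p^{j}}\pmod p$. The form $g^{p^{j}}$ is a genuine cusp form of weight $(\lambda+\frac12)p^{j}$ on $\Gamma_0(Np^{j})$ with integral coefficients and trivial character (an odd power of a form of weight $\lambda+\frac12$ carries again the trivial theta-multiplier, now in weight $(\lambda+\frac12)p^{j}$, since $p^{j}$ is odd), and modulo $p$ its $q$-expansion is supported on exponents divisible by $p^{j}$. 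Now apply $U_p\colon\sum c(m)q^m\mapsto\sum c(pm)q^m$ exactly $j$ times (for half-integral weight this is the operator $a(n)\mapsto a(pn)$, possibly followed by a harmless quadratic twist that does not affect the eventual level-$N$ conclusion modulo $p$): on $q$-expansions $U_p^{\,j}$ undoes $V_{p^{j}}$, so $g^{p^{j}}\,|\,U_p^{\,j}\equiv g\pmod p$, while on levels the first $j-1$ applications bring $\Gamma_0(Np^{j})$ down to $\Gamma_0(Np)$, each such step being legitimate because $p^{2}$ divides the current level at that stage, and the last application keeps us at $\Gamma_0(Np)$. It remains to descend from $\Gamma_0(Np)$ to $\Gamma_0(N)$ modulo $p$, i.e.\ to remove one single power of $p$ at a prime with $p\nmid N$; this is exactly the content of Proposition \ref{reduction} as carried out in \cite{A-B}. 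In the integral weight case it rests on the Deligne--Rapoport model of $X_0(Np)$ in characteristic $p$ together with the $q$-expansion principle (a mod $p$ cusp form of level $\Gamma_0(Np)$ is congruent to one of level $\Gamma_0(N)$ with the weight raised by a multiple of $p-1$), and in the half-integral weight case one invokes the corresponding mod $p$ theory for forms on $\Gamma_0(4M)$. Combining this descent with the two moves above produces $G_0$, hence $G$.

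The main obstacle is precisely that last passage $\Gamma_0(Np)\to\Gamma_0(N)$: iterating $U_p$ cannot reduce the level below one power of $p$, and Fermat together with the Eisenstein correction is purely formal, so the essential input is the characteristic-$p$ geometry of modular curves with $\Gamma_0(4M)$-structure (equivalently, a careful analysis of the trace and Atkin--Lehner operators at $p$) — which is why we borrow it from \cite{A-B}. Everything else is routine: the weight bookkeeping that leads to $(\lambda+\frac12)p^{j}+p^{e}(p-1)$, the integrality of the Fourier coefficients, and the triviality of the character all follow mechanically from the construction.
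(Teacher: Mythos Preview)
The paper does not supply a proof of this proposition; it is quoted from Ahlgren--Boylan \cite{A-B} and used as a black box in the proof of Theorem~\ref{main2}. There is therefore no argument in the paper to compare your sketch against.

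As for the sketch itself: the Fermat/$V_p$/$U_p$ manoeuvre to pass from level $Np^{j}$ down to level $Np$ while multiplying the weight by $p^{j}$ is standard, and the $E_{p-1}$ padding to adjust the weight by multiples of $p-1$ is likewise routine. But the step you yourself flag as the ``main obstacle''---removing the last factor of $p$ from the level---is the entire substance of the proposition, and you resolve it by invoking \cite{A-B}, which is precisely the source from which the proposition is quoted. So what you have written is not an independent proof but a reduction of the general-$j$ statement to (essentially) its $j=1$ instance, followed by a citation of the very reference the statement comes from. If the intent was only to explain the shape of the argument and why the weight comes out as $(\lambda+\tfrac12)p^{j}+p^{e}(p-1)$, that is fine and your account is accurate; if the intent was to give a self-contained proof, the essential geometric input (the mod~$p$ descent from $\Gamma_0(Np)$ to $\Gamma_0(N)$ for half-integral weight) is still outsourced.
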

\noindent Using Theorem \ref{main1} and Proposition
\ref{reduction}, we give the proof of Theorem \ref{main2}.
\begin{proof}[Proof of Theorem \ref{main2}]
Let
\begin{equation}\label{h-p}
h_{1,p}(z):=h(z)-\left(\frac{-1}{p}\right)h_{\chi_p}(z),
\end{equation}
where $h_{\chi_p}(z)$ is the $\chi_p$-twist of $h(z)$. From
(\ref{Za}), we have
$$h_{1,p}(z):=-2-\sum_{\begin{smallmatrix}
  0<d\equiv0,3\pmod{4} \\
  p|d
\end{smallmatrix}}t_1(d)q^d-2\sum_{\begin{smallmatrix}
  0<d\equiv0,3\pmod{4} \\
  \left(\frac{-d}{p}\right)=-1
\end{smallmatrix}}t_1(d)q^d$$ and
\begin{align*}h_{m,p}(z)&:=h_{1,p}(z)|T(m^2,1,\chi_0)\\
&=-2-\sum_{\begin{smallmatrix}
  0<d\equiv0,3\pmod{4} \\
  p|d
\end{smallmatrix}}t_m(d)q^d-2\sum_{\begin{smallmatrix}
  0<d\equiv0,3\pmod{4} \\
  \left(\frac{-d}{p}\right)=-1
\end{smallmatrix}}t_m(d)q^d
\end{align*}
for every positive integer $m$. Let
$$F_p(z):=\frac{\eta(4z)^{p^2}}{\eta(4pz)}.$$ It was proved in
\cite{A-O} that if $\alpha$ is a sufficiently large positive
integer, then $h_{1,p}(z)F_p(z)^{\alpha} \in
M_{\frac{3}{2}+k_0}(\Gamma_0(4p^2))$ and
\begin{equation}\label{3.2}
h_{1,p}(z)F_p(z)^{\alpha} \equiv h_{1,p}(z) \pmod{p},
\end{equation}
where $k_0=\alpha\cdot\frac{p^2-1}{2}$. Lemma \ref{l-main1} and
Proposition \ref{reduction} imply that there exists $f_{1,p}(z)\in
S_{\lambda'+\frac{1}{2}}(\Gamma_0(4))\cap\mathbb{Z}[[q]]$ such
that $$f_{1,p}(z)\equiv -2\sum_{\begin{smallmatrix}
  0<d\equiv0,3\pmod{4} \\
  \left(\frac{-d}{p}\right)=-1
\end{smallmatrix}}t_m(d)q^d\pmod{p},$$
where $\lambda'=(k_0+1+(p-1)(p+1)+\frac{1}{2})p^2+p^e(p-1)$ for a
sufficiently large $e\in \mathbb{N}$.

We assume that the coefficients of $f_{1,p}(z)$ do not satisfy
Property A for an odd prime $p\equiv2 \pmod{3}$. Note that
$\left(\frac{-3}{p}\right)=-1$ and that $p \nmid (3-1)3(3+1)$. So,
Theorem \ref{main1} implies that
$$2t_1(3) \equiv0 \pmod{p}.$$ This results in a contradiction since
$2t_1(3)=2^4\cdot31$. Thus, we obtain a proof when $m=1$.

For every odd prime $\ell$, we have
\begin{align*}
f_{1,p}(z)|T(\ell^2,\lambda',\chi_0)&\equiv
\theta^{p-1}(h_{1,p}(z))|T(\ell^2,\lambda',\chi_0) \\
&\equiv \theta^{p-1}(h_{1,p}(z)|T(\ell^2,1,\chi_0)) \equiv
\theta^{p-1}(h_{\ell,p}(z)) \pmod{p}.
\end{align*}
 Moreover, Lemma \ref{2-3} implies
that a positive proportion of the primes $\ell$ satisfies the
property
$$f_{1,p}(z)|T(\ell^2,\lambda',\chi_0)\equiv 2f_{1,p}
\pmod{p}.$$ This completes the proof.
\end{proof}
\subsection{Proofs of Theorem \ref{main3}}
The following theorem gives the formula for the Hurwitz class
number in terms of the Fourier coefficients of a modular form of
half integral weight.
\begin{thr}\label{H-C} Let $T(z):=1+2\sum_{n=1}^{\infty}q^{n^2}$.
If integers $r_3(n)$ are defined as
$$\sum_{n=0}^{\infty}r_3(n)q^n:=T(z)^3,$$ then $$r(n)=\left\{\begin{array}{ll}
  12H(-4n) &\text{ if } n\equiv 1,2 \pmod{4}, \\
  24H(-n) &\text{ if } n\equiv 3 \pmod{8}, \\
  r(n/4)   &\text{ if } n\equiv 0 \pmod{4}, \\
  0        &\text{ if } n\equiv 7 \pmod{8}.
\end{array}\right.$$
\end{thr}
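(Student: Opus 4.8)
\smallskip
\noindent\textbf{Proof proposal.} This is Gauss's three-squares theorem, and the plan is to recover it from the theory of ternary quadratic forms together with Dirichlet's class number formula. First note that $T(z)=\sum_{m\in\mathbb Z}q^{m^2}$, so $T(z)^3=\sum_{(x,y,z)\in\mathbb Z^3}q^{x^2+y^2+z^2}$ and $r(n)=r_3(n)$ is the number of ordered representations of $n$ as a sum of three squares. Two of the four cases are immediate. If $n\equiv 7\pmod 8$, then every square is congruent to $0$, $1$ or $4$ modulo $8$ and no three such residues sum to $7$, so $r_3(n)=0$. If $4\mid n$ and $n=x^2+y^2+z^2$, then reduction modulo $4$ forces $x,y,z$ all even, and $(x,y,z)\mapsto(x/2,y/2,z/2)$ is a bijection onto the representations of $n/4$; hence $r(n)=r(n/4)$.

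For the substantive cases $n\equiv 1,2\pmod 4$ and $n\equiv 3\pmod 8$ I would pass to ternary forms. The form $x^2+y^2+z^2$ is alone in its genus, so Siegel's mass formula (the Siegel--Weil identity for ternary forms) identifies $T(z)^3$ with the weight-$\tfrac32$ Eisenstein series on $\Gamma_0(4)$ attached to this genus, whose $n$-th Fourier coefficient is the singular series $\prod_{p}\beta_p(n)$. The task is then to evaluate this product: the odd primes contribute, after appeal to Dirichlet's formula $L(1,\chi_{-D})=2\pi h(-D)/(w_D\sqrt D)$, exactly the Hurwitz class number $H(-4n)$ (respectively $H(-n)$ in the case $n\equiv 3\pmod 8$), the square-divisor sum and the $1/\omega_F$ weights built into the definition of $H$ being precisely what absorbs imprimitive representations and forms with extra automorphs; the remaining factor $\beta_2(n)$ then produces the split modulo $8$ and the constants $12$ and $24$. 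Equivalently one can run the original Gauss argument: a primitive representation $n=x^2+y^2+z^2$ determines the rank-two lattice orthogonal to $(x,y,z)$, on which $x^2+y^2+z^2$ restricts to a positive definite binary form of discriminant $-4n$ (or $-n$), and this sets up, after bookkeeping of primitivity and automorphs, a bijection between representations and class-number data.

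The main obstacle is the prime $2$: the $2$-adic local density computation --- equivalently, the careful accounting of automorphs and of primitive versus imprimitive representations at $2$ --- is where all four congruence conditions and the precise constants $12$ and $24$ originate. I would fix the normalizations by checking small cases, e.g.\ $r_3(1)=6=12\cdot H(-4)=12\cdot\tfrac12$, $r_3(2)=12=12\cdot H(-8)$, and $r_3(3)=8=24\cdot H(-3)=24\cdot\tfrac13$. By contrast, the odd-prime contributions are routine once Dirichlet's formula is in hand.
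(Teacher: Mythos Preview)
The paper does not prove this statement at all: it is simply stated as a classical result (Gauss's theorem on sums of three squares) and then immediately applied to prove Theorem~\ref{main3}. No argument or even a reference is given; the author treats it as background.

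Your sketch is a reasonable outline of a standard proof. The two elementary cases ($n\equiv 7\pmod 8$ and $4\mid n$) are handled correctly and completely. For the remaining cases you describe two legitimate routes---the Siegel--Weil identification of $T(z)^3$ with a weight-$\tfrac32$ Eisenstein series, and Gauss's original lattice argument---and you correctly locate the real work in the $2$-adic local density (equivalently, the primitivity and automorph bookkeeping at $2$). As written this is an honest proof \emph{plan} rather than a proof: the constants $12$ and $24$ are checked numerically but not derived, and the passage from the singular series to $H(-4n)$ or $H(-n)$ is asserted rather than carried out. If you want a complete argument, either execute the local density computation in full (as in Siegel's original papers or in standard references on ternary forms) or carry through Gauss's correspondence with all the counting of automorphs made explicit. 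But for the purposes of this paper nothing more is needed, since the result is quoted, not proved.
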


Note that $T(z)$ is a half integral weight modular form of weight
$\frac{1}{2}$ on $\Gamma_0(4)$. Combining Theorem \ref{main1} and
Theorem \ref{H-C}, we derive the proof of Theorem \ref{main3}.
\begin{proof}[Proof of Theorem \ref{main3}]
Let $G(z)$ be the $\left(\frac{4}{}\right)$-twist of $T(z)^3$.
Then, from Theorem \ref{H-C}, we have
\begin{equation*}
G(z)=1+\sum_{n\equiv 1 \pmod{4}}12H(-4n)q^n+\sum_{n\equiv 3
\pmod{8}}24H(-n)q^n
\end{equation*}
 and $G(z)\in M_{\frac{3}{2}}(\Gamma_0(16))$. Note
that $24H(-3)=8$. This gives the complete proof by Theorem
\ref{main1}.
\end{proof}

\subsection{Proofs of Theorem \ref{main4}}In the following, we prove Theorem \ref{main4}.
\begin{proof}[Proof of  Theorem \ref{main4}]
Let $$W(z):=\frac{\eta(2z)}{\eta(z)^2}.$$ It is known that
$$W(z)=\sum_{n=0}^{\infty}\bar{P}(n)q^n$$ and that $W(z)$ is a weakly holomorphic modular form
on $\Gamma_0(16)$. Let
$$G(z):=\left(W(z)-\left(\frac{-1}{p}\right)W_{\chi_p}(z)\right)F_p(z)^{p^{\beta}},$$ where
$F_p(z)=\frac{\eta(4z)^{p^2}}{\eta(4p^2 z)}$ and $\beta$ are
positive integers.
Then we have $$G(z)\equiv 2\sum_{%
\begin{smallmatrix}
  0<n \\
  \left(\frac{-n}{p}\right)=-1
\end{smallmatrix}%
}\bar{P}(n)q^n+\sum_{%
\begin{smallmatrix}
  0<n \\
  p|n
\end{smallmatrix}%
}\bar{P}(n)q^n \pmod{p}. $$

We claim that there exists a positive integer $\beta$ such that
$G(z)$ is a holomorphic modular form of half integral weight on
$\Gamma_0(16p^2)$. To prove our claim, we follow the arguments of
Ahlgren and Ono (\cite{A-B}, Lemma 4.2). Note that, by a
well-known criterion, $F_p(z)$ is a holomorphic modular form on
$\Gamma_0(4p^2)$ that vanishes at each cusp $\frac{a}{c} \in
\mathbb{Q}$ for which $p^2 \nmid c$ (see \cite{G-H}). This implies
that $G(z)$ is a weakly holomorphic modular form on
$\Gamma_0(16p^2)$. If $\beta$ is sufficiently large, then $G(z)$
is holomorphic except at each cusp $\frac{a'}{c'}$ for which
$p^2|c'$.

Thus, we prove that $G(z)$ is holomorphic at $\frac{1}{2^mp^2}$
for $0 \leq m \leq 3$. Let, for odd $d$,
$$\epsilon_d:=\left\{\begin{array}{c}
  1 \text{ if } d\equiv 1 \pmod{4}, \\
  i \text{ if } d\equiv 3 \pmod{4}.
\end{array}\right.$$ If $f(z)$ is a function on the complex upper
half plane, $\lambda\in
\mathbb{Z}$, and $\gamma=\left(%
\begin{smallmatrix}
  a & b \\
  c & d
\end{smallmatrix}%
\right)\in \Gamma_0(4)$, then we define the usual slash operator
by
\begin{equation*}
f(z) \mid_{\lambda+\frac{1}{2}}
\gamma:=\left(\frac{c}{d}\right)^{2\lambda+1}
\epsilon_d^{-1-2\lambda}(cz+d)^{-\lambda-\frac{1}{2}}f\left(\frac{az+b}{cz+d}\right).
\end{equation*} Let
$g:=\sum_{v=1}^{\infty}\left(\frac{v}{p}\right)e^{2 \pi i v/p}$ be
the usual Gauss sum. Note that
$$W_{\chi_p}(z)=\frac{g}{p}\sum_{v=1}^{p-1}\left(\frac{v}{p}\right)W(z)|_{-\frac{1}{2}}
\left(%
\begin{smallmatrix}
  1 & -v/p \\
  0 & 1 \\
\end{smallmatrix}%
\right).$$ Choose an integer $k_v$ satisfying $$16k_v \equiv 15v
\pmod{p}.$$ Then, we have \begin{equation}\label{3.3}\left(%
\begin{matrix}
  1 & -\frac{v}{p} \\
  0 & 1 \\
\end{matrix}%
\right)\left(%
\begin{matrix}
  1 & 0 \\
  2^m p^2 & 1 \\
\end{matrix}%
\right)=\gamma_{v,m}\left(%
\begin{matrix}
  1 & 0 \\
  2^mp^2 & 1 \\
\end{matrix}%
\right)\left(%
\begin{matrix}
  1 & -\frac{16v}{p}+\frac{16k_v}{p} \\
  0 & 1 \\
\end{matrix}%
\right),\end{equation} where $$\gamma_{v,m}=\left(%
\begin{matrix}
  1-2^{m+4}p(v+k_v+2^{m}v^2p-2^{m}vk_vp) & \frac{1}{p}(15v-16k_v-2^{m+4}(v^2p+vk_vp)) \\
 2^{2m} p^2(-16vp+16k_v p) & 2^{m+4}vp-2^{m+4}k_v p+1 \\
\end{matrix}%
\right).$$ Note that $W(z)$ has its only pole at $z \sim 0$ up to
$\Gamma_0(16)$. Since $\gamma_{v,m} \in \Gamma_0(16)$, the formula
(\ref{3.3}) implies that $W_{\chi_p}(z)$ is holomorphic at
$2^mp^2$ for $1\leq m \leq 3$. Thus, $G(z)$ is holomorphic at
$2^mp^2$ for $1\leq m \leq 3$.

If $m=0$, then we have
$$W(z)|_{-\frac{1}{2}}\gamma_{v,0}=\left(\frac{-16vp^3+16k_v p^3}{16vp-16k_v
p+1}\right)W(z)=\left(\frac{p^2(-vp+k_v p)}{16vp-16k_v
p+1}\right)W(z)=W(z).$$ Note that \begin{equation}\label{3.4}
W(z)|_{-\frac{1}{2}}\left(%
\begin{smallmatrix}
  1 & 0 \\
  p^2 & 1 \\
\end{smallmatrix}%
\right)=\alpha \cdot q^{-\frac{1}{16}}+O(1),
\end{equation}
where $\alpha$ is a nonzero complex number.  The $q$-expansion of
$W_{\chi_p}(z)$ at $\frac{1}{p^2}$ is given by
\begin{equation}\label{3.5}
W_{\chi_p}(z)|_{-\frac{1}{2}}\left(%
\begin{smallmatrix}
  1 & 0 \\
  p^2 & 1 \\
\end{smallmatrix}%
\right).
\end{equation}
Using (\ref{3.3}) and (\ref{3.4}), the only term in (\ref{3.5})
with a negative exponent on $q$ is the term
$$\frac{g}{p}\alpha q^{-\frac{1}{16}}\sum_{v=1}^{p-1}\left(\frac{v}{p}\right)e^{\frac{2\pi i}{p}(v-k_v)}.$$
If N is defined by $16N \equiv 1 \pmod{p}$, then we have
$$\frac{g}{p}\alpha q^{-\frac{1}{16}}\sum_{v=1}^{p-1}\left(\frac{v}{p}\right)e^{\frac{2\pi i}{p}(v-k_v)}
=\frac{g}{p}\alpha
q^{-\frac{1}{16}}\sum_{v=1}^{p-1}\left(\frac{v}{p}\right)e^{\frac{2\pi
i}{p}Nv}=\frac{g^2}{p}\alpha q^{-\frac{1}{16}}
=\left(\frac{-1}{p}\right)\alpha q^{-\frac{1}{16}}.$$ Thus, we
have that
$$(W(z)-W_{\chi_p}(z))|_{-\frac{1}{2}}\left(%
\begin{smallmatrix}
  1 & 0 \\
  p^2 & 1 \\
\end{smallmatrix}\right)=O(1).$$
This implies that $G(z)$ is a holomorphic modular form of half
integral weight on $\Gamma_0(16p^2)$. Noting that
$$\bar{P}(3)=8,$$ the remaining part of the proof is similar to
that in Theorem \ref{main3}. Thus, it is omitted.
\end{proof}

\end{document}